\renewcommand{\Cup}{\bigcup}
\renewcommand{\Cap}{\bigcap}
\renewcommand{\a}{\alpha}
\renewcommand{\b}{\beta}
\newcommand{\g}{\gamma}
\newcommand{\e}{\varepsilon}
\renewcommand{\d}{\delta}
\renewcommand{\k}{\kappa}
\newcommand{\f}{\varphi}
\renewcommand{\o}{\omega}
\newcommand{\es}{\varnothing}
\newcommand{\A}{\mathcal{A}}
\newcommand{\E}{\mathcal{E}}
\newcommand{\fatdiamond}{\text{\rotatebox[origin=c]{90}{$\diamondsuit$}}}
\newcommand{\la}{\langle} 
\newcommand{\ra}{\rangle}
\newcommand{\Po}{\mathcal{P}}             
\newcommand{\sing}{{\operatorname{sing}_\omega}}
\newcommand{\ZFC}{{\operatorname{ZFC}}}
\newcommand{\SH}{{\operatorname{SH}}}
\newcommand{\cf}{\operatorname{cf}}
\newcommand{\id}{\operatorname{id}}
\newcommand{\height}{{\operatorname{ht}}}
\newcommand{\pr}{\operatorname{pr}}
\newcommand{\rest}{\!\restriction\!}
\newcommand{\restl}{\restriction}  
\newcommand{\dom}{\operatorname{dom}}
\renewcommand{\le}{\leqslant}  
\renewcommand{\ge}{\geqslant}  
\newcommand{\sd}{\,\triangle\,}             
\newcommand{\PlOne}{\,{\textrm{\bf I}}}
\newcommand{\PlTwo}{\textrm{\bf I\hspace{-1pt}I}}
\newtheorem*{Thm*}{Theorem}
\newtheorem{Thm}{Theorem}
\newtheorem{Lemma}[Thm]{Lemma}
\newtheorem{claim}{Claim}[Thm]
\theoremstyle{definition}
\newtheorem{Def}[Thm]{Definition}
\theoremstyle{remark}
\newtheorem*{Remark}{Remark}
\newcommand{\proofvpara}{\text{}}
\author{Sy-David Friedman, Vadim Kulikov\\ Kurt Gödel Research Center\\University of Vienna}
\title{Failures of the Silver Dichotomy in the Generalised Baire Space}
\date{February 2014}
\begin{document}

\maketitle

\begin{abstract}
  We prove results that falsify Silver's dichotomy for Borel equivalence relations
  on the generalised Baire space under the assumption~$V=L$.
\end{abstract}

\vspace{10pt}
MSC: 03E15, 03E35
\vspace{10pt}

\section{Introduction}
\label{sec:intro}

The study of Borel equivalence relations and their reducibility springs from
the interest in classification problems in mathematics. The classical theory
studies Borel and analytic equivalence relations on Polish spaces and the partial
order formed by these equivalence relation with respect to Borel reducibility $\la\E,\le_B\ra$.
The generalised descriptive theory, initiated in the 1990's by the work of Halko, Mekler, Väänänen
and Shelah \cite{Hal,HalShe,MekVaa}, and recently developed further \cite{FHK,Kul,Lucke} studies
the classification problems on generalised Baire and Cantor spaces, $\k^{\k}$ and $2^\k$ for
uncountable regular $\k$. As is already a custom we concentrate on cardinals with $\k^{<\k}=\k$. 

We show that the classical result, known as the Silver dichotomy, fails in the generalised
setting in the following two ways.
It was shown in \cite{Kul}, in particular, that the power set of $\k$ ordered by inclusion,
$\la\Po(\k),\subset\ra$ can be embedded into $\la\E,\le_B\ra$.
In this paper we show that if $\k$ is inaccessible and $V=L$, then 
$\la\la\Po(\k),\subset\ra$ can be embedded into $\la\E,\le_B\ra$ \emph{below}
the identity relation (Theorem~\ref{thm:Main1}). Then we show that if $V=L$ and $\k$ is uncountable
and regular, then there is an antichain with respect to $\le_B$ of length $2^{\k}$ of Borel equivalence relations
and each of these relations is also incomparable with the identity relation. 

In this paper we always work in $\ZFC+V=L$ unless stated otherwise.

\paragraph{Acknowledgement}

The authors wish to thank the FWF for its support through Project P 24654 N25.

\section{Preliminaries}

\subsection{Fat Diamond}
\label{ssec:FatDiamond}

\begin{Def}[Fat diamond] 
  A \emph{fat diamond} on $\k$, denoted $\fatdiamond_\k$, is a sequence $\la S_\a\mid \a<\k\ra$ such that
  for all $\a<\k$, $S_\a\subset \a$ and for every set $S\subset\k$, cub set $C\subset \k$ and
  $\g<\k$ there is a continuous increasing sequence of order type $\g$ inside the set 
  $$C\cap \{\a<\k\mid S\cap \a=S_\a\}.$$
\end{Def}

\begin{Thm}[$V=L$] 
  A $\fatdiamond_\k$-sequence exists for uncountable regular $\k$.  
\end{Thm}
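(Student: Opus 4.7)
The plan is to follow the classical Jensen-style condensation argument that delivers $\diamondsuit_\k$ in $L$, and to upgrade ``stationary guessing'' to ``continuous-sequence guessing'' by running the usual contradiction through a continuous $\g$-indexed chain of Skolem hulls.

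First I would define the candidate sequence by recursion using the canonical well-ordering $<_L$ of $L$. For each $\alpha<\k$, let $(S_\alpha,C_\alpha,\g_\alpha)$ be the $<_L$-least triple, if any exists, such that $S_\alpha\subseteq\alpha$, $C_\alpha\subseteq\alpha$ is cub in $\alpha$, $\g_\alpha<\alpha$, and the set $C_\alpha\cap\{\xi<\alpha\mid S_\alpha\cap\xi=S_\xi\}$ contains no continuous increasing sequence of order type $\g_\alpha$; otherwise set $S_\alpha=\es$. Since the initial segment $\la S_\xi\mid\xi<\alpha\ra$ is available at stage $\alpha$, this recursion is well-defined.

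To verify the fat diamond property, suppose toward a contradiction that some triple $(S,C,\g)$ witnesses failure globally, and choose the $<_L$-least such triple. Fix a sufficiently large cardinal $\theta$ in $L$, and construct by recursion on $\xi<\g$ a continuous elementary chain of Skolem hulls $M_\xi\prec L_\theta$, each of cardinality $|\g|+\aleph_0<\k$, containing $S$, $C$, $\g$, the sequence $\la S_\alpha\mid\alpha<\k\ra$, and all earlier $M_{\xi'}$. Using that $C$ is cub and that $\k^{<\k}=\k$ in $L$, the ordinals $\alpha_\xi:=M_\xi\cap\k$ can be arranged to lie in $C$, and continuity of the chain forces $\alpha_\xi=\sup_{\xi'<\xi}\alpha_{\xi'}$ at each limit $\xi<\g$.

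Now I would apply $L$-condensation: each $M_\xi$ collapses via its transitive collapse $\pi_\xi$ onto some $L_{\beta_\xi}$, with $\pi_\xi$ the identity below $\alpha_\xi$ and $\pi_\xi(\k)=\alpha_\xi$. Under $\pi_\xi$ the triple $(S,C,\g)$ maps to $(S\cap\alpha_\xi,\,C\cap\alpha_\xi,\,\g)$, and by elementarity together with the $<_L$-minimality of $(S,C,\g)$ this collapsed triple is the $<_L$-least local counterexample at $\alpha_\xi$. The defining clause of the candidate sequence then forces $S_{\alpha_\xi}=S\cap\alpha_\xi$. Hence $\la\alpha_\xi\mid\xi<\g\ra$ is a continuous increasing sequence of order type $\g$ lying inside $C\cap\{\alpha<\k\mid S\cap\alpha=S_\alpha\}$, contradicting the choice of $(S,C,\g)$.

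The main obstacle is the condensation step: one must check that each Skolem hull $M_\xi$ really collapses onto an initial segment $L_{\beta_\xi}$ of the constructible hierarchy and that the ``$<_L$-least local counterexample'' clause is genuinely absolute between $L_{\beta_\xi}$ and the true $L$, i.e., that no earlier $<_L$-triple which happens to surface only at higher levels of the $L$-hierarchy can disturb the minimality. Both are standard consequences of the condensation lemma in $L$ once $\theta$ is taken large enough and the Skolem functions are the canonical definable ones, but they are the points at which care is required; the rest of the argument is a routine continuous-chain construction.
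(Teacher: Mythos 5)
Your proposal is essentially the paper's own argument: the paper likewise defines $S_\alpha$ by recursion as (a coordinate of) the $<_L$-least local counterexample, takes the $<_L$-least global counterexample $(C,S,\g)$, builds a continuous increasing chain of elementary submodels of $L_{\k^+}$ whose traces on $\k$ land in $C$, and uses condensation plus absoluteness of the $L$-ordering to conclude that each trace point $\alpha$ satisfies $S\cap\alpha=S_\alpha$. The only differences (minimizing over triples rather than pairs with an existential over $\g$, and hulling in $L_\theta$ rather than $L_{\k^+}$) are cosmetic, and the absoluteness point you flag is exactly the step the paper also relies on.
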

\begin{proof}
  Suppose $\b\le\k$, $(S_\a)_{\a<\b}$ is defined and
  $(C^*,S^*,\g^*)$ is a triple such that $C^*,S^*\subset \b$, $\g^*<\b$,
  $C^*$ is cub in $\b$ and there
  exists no continuous increasing sequence of order type $\g^*$ in $C^*\cap \{\a\mid S^*\cap\a=S_\a\}$.
  Then we abbreviate this by $R(C^*,S^*,\g^*)$.

  Let $(C_0,S_0)=(\es,\es)$. By induction, suppose that $(C_\a,S_\a)$ is defined for all $\a<\b$.
  Let $(C_\b,S_\b)$ be the $L$-least pair such that for some $\g<\b$ we have $R(C_\b,S_\b,\g)$,
  if such exists and set $(C_\b,S_\b)=(\es,\es)$ otherwise. 

  Let us show that the sequence $\la S_\a\ra_{\a<\k}$ obtained in this way is a $\fatdiamond_\k$-sequence. 
  Note that this sequence is definable in~$L$.
  Suppose on the contrary that it is not a $\fatdiamond_\k$-sequence. Then
  there exists a cub $C$ and $S\subset \k$ such that there exists an ordinal $\g<\k$
  with~$R(C,S,\g)$.
  Suppose that $(C,S)$ is the $L$-least such pair and $\g$ the least ordinal 
  witnessing this. Note that then $(C,S)$ and $\g$ are definable in~$L$.
  Then build a continuous increasing sequence $(M_\b)_{\b<\g}$ of elementary
  submodels of $L_{\k^+}$
  such that $(M_\b\cap\k)_{\b<\g}$ is a continuous increasing
  sequence of ordinals in~$C$.
  Let us show that each $M_\b\cap\k$ is also in $\{\a\mid S\cap\a=S_\a\}$ which is a contradiction.

  So let $\b<\g$, denote $\b'=M_\b\cap \k$ and let $\pi$ be the transitive
  collapse of $M_\b$ onto some $L_{\b''}$. 
  Then $\pi(C)=C\cap\b'$ and $\pi(S)=S\cap\b'$. Moreover by the elementarity of $M_\b$ in $L$
  $$M\models (C,S)\text{ is the }L\text{-least pair s.t. }\exists\d<\k R(C,S,\d).$$
  Applying $\pi$ we get
  $$L_{\b''}\models (C\cap\b',S\cap \b')\text{ is the }L
  \text{-least pair s.t. }\exists\d<\b'R(C\cap\b',S\cap \b',\d)$$
  But then by absoluteness of the $L$-ordering, this holds also in $L$, so
  in fact $S\cap\b'=S_{\b'}$, so $\b'$ is in $\{\a\mid S\cap\a=S_\a\}$ as intended.
\end{proof}

\begin{Def}
  A stationary set $S\subset \k$ is \emph{fat}, if for all cub sets $C\subset\k$
  and all $\g<\k$ there is a continuous increasing sequence of length $\g$ in $S\cap C$.
\end{Def}

\begin{Thm}[$V=L$]\label{thm:fatsetInL}
  If $\k>\o$ is regular, then there exists a fat stationary set $S\subset\k$ such that $\k\setminus S$
  is also fat stationary.
\end{Thm}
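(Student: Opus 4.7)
The plan is to extract the two fat stationary sets directly from a fat diamond sequence $\langle S_\alpha \mid \alpha < \kappa \rangle$, which exists by the preceding theorem. The key idea is to use a single bit of information from each $S_\alpha$, namely whether $0$ belongs to $S_\alpha$, to produce a partition of $\kappa$ into two pieces, both of which inherit fatness from the guessing property of the diamond.

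Concretely, I would define
$$S = \{\alpha < \kappa \mid 0 \in S_\alpha\}, \qquad T = \kappa\setminus S = \{\alpha < \kappa \mid 0 \notin S_\alpha\}.$$
To verify that $S$ is fat, fix a cub $C \subset \kappa$ and an ordinal $\gamma < \kappa$. After replacing $C$ by $C \setminus \{0\}$ if necessary (this remains cub, since $0$ is not a limit ordinal), apply $\fatdiamond_\kappa$ to the subset $\{0\} \subset \kappa$, the cub $C$, and the ordinal $\gamma$. This yields a continuous increasing sequence of order type $\gamma$ inside
$$C \cap \{\alpha < \kappa \mid \{0\} \cap \alpha = S_\alpha\};$$
for each such $\alpha > 0$ one has $S_\alpha = \{0\}$, hence $\alpha \in S$, so the whole sequence sits in $S \cap C$. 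Symmetrically, applying fat diamond to the empty subset produces a continuous increasing sequence of order type $\gamma$ inside $C \cap \{\alpha \mid S_\alpha = \varnothing\} \subset T$, witnessing that $T$ is fat.

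Stationarity of each piece follows from fatness by taking $\gamma = 1$ against an arbitrary cub, so both $S$ and $\kappa \setminus S = T$ are fat stationary, as required. The approach has no real obstacle: once the fat diamond is in hand, the splitting is essentially immediate, since the diamond already furnishes the hard object (continuous sequences of every length inside prescribed guessing sets), and any Boolean-valued feature read off $S_\alpha$ (here, the indicator of $0 \in S_\alpha$) inherits all the needed richness. The only minor technical point is the harmless adjustment to ensure $0 \notin C$ in the first application of fat diamond.
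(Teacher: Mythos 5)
Your proof is correct and follows essentially the same route as the paper: both arguments read two disjoint fat sets directly off the $\fatdiamond_\k$-sequence by applying the guessing property to two different subsets of $\k$ (the paper uses $\k$ and $\es$, yielding $\{\a\mid S_\a=\a\}$ and $\{\a\mid S_\a=\es\}$; you use $\{0\}$ and $\es$). Your variant is, if anything, marginally tidier in that your second fat set is literally contained in $\k\setminus S$ by construction, whereas the paper leaves implicit that fatness of a subset passes up to the complement.
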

\begin{proof}
  Let $S=\{\a\mid S_\a=\a\}$ where $(S_\a)_{\a<\k}$ is the $\fatdiamond_\k$-sequence
  defined above. Then $S$ is fat by definition.
  But also $S'=\{\a\mid S_\a=\es\}$ is fat stationary and is disjoint from~$S$.
\end{proof}

\subsection{Trees}
\label{sec:Trees}

\begin{Def}
  $\k^{<\k}$ is the tree consisting of all functions $p\colon\a\to\k$ for $\a<\k$ ordered by end-extension:
  $p<q\iff p\subset q$.
  By a \emph{tree} we mean a downward closed suborder of $\k^{<\k}$. 
  A subtree is a subset of a tree which is itself a tree.
  Let $T\subset \k^{<\k}$ be a tree. A \emph{branch} through
  $T$ is a set $b$ which is a maximal linear suborder of $T$. The set of all branches of $T$ is denoted by
  $[T]$. The set of all branches of length $\k$ is denoted by $[T]_{\k}$. 
  The \emph{height} of an element $p\in T$, denoted $\height(p)$, is the order type of $\{q\in t\mid q<p\}$.
  Let $\a<\k$ be an ordinal. Denote by $T^{\a}$ the subtree of $T$ formed
  by all the elements with $\height(p)<\a$.  
\end{Def}

\section{For an Inaccessible $\k$.}
\label{sec:EmbeddingOfPk}

In this section we show that, if $V=L$ and $\k$ is a strongly inaccessible cardinal,
then there exists an embedding $F$ of $\la \Po(\k),\subset\ra$ into $\la\E,\le_B\ra$ where
$\E$ is the set of Borel equivalence relations on $2^\k$ such that
for all $A\in \Po(\k)$, $F(A)\lneq_B \id_{2^\k}$.

\begin{Def}\label{def:Trees}
  Let $\sing(\k)$ be the set of singular $\omega$-cofinal cardinals below $\k$.
  We will construct for each set $S\subset \sing(\k)$ a \emph{weak $S$-Kurepa tree} $T_S$ as follows.
  For each $\a\in\sing(\k)$ let $f(\a)$ be the least limit ordinal $\b$ such that 
  $L_{\b}\models(\a\text{ is singular})$.
  Then let 
  $$T_S=\{s\in 2^{<\k}\mid \forall \a\le\dom(s)(\a\in S\rightarrow s\rest \a\in L_{f(\a)})\}.$$
\end{Def}

\begin{Lemma}\label{lem:SmallLevels}
  Let $S\subset\sing(\k)$. Then for every $\g\in S$ we have $|T_S^{\g+1}|=|\g|$.
\end{Lemma}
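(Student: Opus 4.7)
The plan is to prove the equality by bounding $|T_S^{\gamma+1}|$ from above and from below separately; the nontrivial work is all in the upper bound.

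For the upper bound, I would split $T_S^{\gamma+1}$ into the sequences of height strictly less than $\gamma$ and those of height exactly $\gamma$. Any $s\in T_S$ with $\dom(s)<\gamma$ is merely a binary sequence of length $<\gamma$, so the count of such $s$ is at most $2^{<\gamma}$. Since $V=L$ yields GCH and $\gamma$, being a singular cardinal, is a limit cardinal, $2^{<\gamma}=\sup\{\mu^+\mid\mu<\gamma\text{ a cardinal}\}=\gamma$. For $s\in T_S$ with $\dom(s)=\gamma$, the defining clause of $T_S$ applied at $\a=\gamma\in S$ forces $s=s\rest\gamma\in L_{f(\gamma)}$, so the number of sequences at the top level is at most $|L_{f(\gamma)}|=|f(\gamma)|$.

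The key remaining point is the estimate $f(\gamma)<\gamma^+$, which is where $V=L$ enters essentially. Because $\gamma\in\sing(\k)$ is $\omega$-cofinal in $V=L$, a cofinal function $g\colon\omega\to\gamma$ exists in $L$ and lies in some $L_\beta$ with $\beta<(\gamma^+)^L=\gamma^+$. Then $L_\beta\models\gamma\text{ is singular}$, so $f(\gamma)\le\beta<\gamma^+$ and hence $|L_{f(\gamma)}|\le|\gamma|$. Summing the two cases gives $|T_S^{\gamma+1}|\le|\gamma|+|\gamma|=|\gamma|$.

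For the lower bound, for each $\a\le\gamma$ the constant-zero sequence of length $\a$ is defined by a trivial formula and so belongs to every $L_{f(\a')}$; it therefore lies in $T_S$, yielding $\gamma+1$ distinct elements of $T_S^{\gamma+1}$. The only substantive obstacle is the count at the top level, and surmounting it is precisely the use of $V=L$ together with $\gamma\in\sing(\k)$ to control $f(\gamma)$.
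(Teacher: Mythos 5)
Your proof is correct and follows essentially the same route as the paper's (much terser) argument: the heart of both is that the top level is trapped in $L_{f(\g)}$ and that $f(\g)<\g^+$ because the singularity of $\g$ is witnessed by some $g\colon\o\to\g$ appearing in $L_\b$ for a (limit) $\b<(\g^+)^L$. The paper simply absorbs the lower levels into $L_{f(\g)}$ as well (every binary sequence of length $<\g$ lies in $L_\g$ since $\g$ is an $L$-cardinal), whereas you count them separately via GCH; these are the same condensation fact, so no substantive difference.
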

\begin{proof}
  When $\g\in S$, then $T_S^{\g+1}$ is a subset of $L_{f(\g)}$ whose cardinality is
  $|f(\g)|$. But $|f(\g)|<|\g|^+$, so we are done.
\end{proof}

\begin{Remark}
  For the converse, 
  if $S$ is as in Lemma \ref{lem:SmallLevels} and $\g\in S^\k_\o\setminus S$, then
  we have $|T^{\g+1}_S|=2^{\g}$: by GCH and $\cf\g=\o$, we have $|2^{\g}|=|\g^\o|$ and
  it is enough to consider increasing cofinal sequences in $\g$. But if $s$ is an increasing
  cofinal sequence in $\g$ of length $\o$, then it is an element of $T_S$.
\end{Remark}

\begin{Lemma}\label{lem:KurepaCo-meager}
  Let $T=T_S$ be a tree as above for some $S\subset\sing(\k)$.
  Let $(D_i)_{i<\k}$ be a sequence of dense open subsets of $T$, i.e.
  such that for all $i<\k$ we have $\forall p\in T\exists q\in D_i(q>p)$ (density)
  and $\forall p\in D_i(N_p\cap T\subset D_i)$ (openness).
  Then there is a branch of length $\k$ in $\Cap_{i<\k}D_i$ through $T$.
\end{Lemma}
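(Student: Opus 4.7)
My plan is to build the $\k$-branch by transfinite recursion, producing an increasing sequence $(p_\a)_{\a<\k}$ in $T_S$ with $p_{\a+1} \in D_\a$ for each $\a$. Concretely: $p_0 = \varnothing$; at a successor $\a+1$, let $p_{\a+1}$ be the $L$-least extension of $p_\a$ lying in $D_\a$ (which exists by density, and automatically sits in $T = T_S$); at a limit $\a$, let $p_\a = \bigcup_{\b<\a} p_\b$. The sought branch is $b = \bigcup_{\a<\k} p_\a$; by construction its initial segment $p_{\a+1}$ meets $D_\a$ for every $\a$.

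The nontrivial verification is that $p_\a \in T_S$ at limits $\a$, by induction on $\a$. For $\a \notin S$, and for the $T_S$-constraints at lower levels $\b \in S \cap \a$, this is immediate from the induction hypothesis. The critical case is $\a \in S$, where I must show $p_\a \in L_{f(\a)}$. My approach is a condensation argument in $V=L$: find an elementary submodel $M \prec L_{\k^+}$ of size $<\k$ containing $x = (D_i)_{i<\k}$ together with $T_S, S$, and satisfying $M \cap \k = \a$. Then the transitive collapse (valid in $V=L$) is a map $\pi \colon M \to L_{\bar\b}$ with $\pi(\k) = \a$. By elementarity $M \models \text{``$\k$ is inaccessible''}$, hence $L_{\bar\b} \models \text{``$\a$ is inaccessible''}$, so $\a$ is a cardinal in $L_{\bar\b}$ and $\bar\b < f(\a)$. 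The recursion defining $(p_\b)_{\b<\k}$ is $L_{\k^+}$-definable from $x$, so this entire sequence lies in $M$; for $\b < \a$ each $p_\b$ has hereditary rank below $\a$ and is therefore fixed by $\pi$, giving $(p_\b)_{\b<\a} \in L_{\bar\b}$, and consequently $p_\a \in L_{\bar\b + \o} \subseteq L_{f(\a)}$.

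The main obstacle is producing such $M$ with $M \cap \k = \a$ exactly: the naive hull $\Sk_{L_{\k^+}}(\a \cup \{x\})$ may contain ordinals in $[\a, \k)$ that are Skolem-definable from $x$ and parameters below $\a$. The hypothesis $\a \in \sing(\k)$ is what should be used to circumvent this: fix $(\a_n)_{n<\o}$ cofinal in $\a$ with each $\a_n$ a regular cardinal $<\a$, and build $M$ as an increasing $\o$-union of hulls $M_n = \Sk_{L_{\k^+}}(\a_n \cup \{x\})$. Since $\a$ is a limit cardinal (so $\a_n^+ < \a$) and $V = L$ (so the transitive collapse of each $M_n$ is some $L_{\g_n}$ with $\g_n < \a$), one can choose the $\a_n$ successively so that $\a_{n+1}$ exceeds every ordinal in $M_n \cap \k$, thereby forcing $M_n \cap \k \subseteq \a_{n+1}$. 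The union then satisfies $M \cap \k = \a$, completing the argument. This is the technical step where the combination of $\k$ inaccessible and $S \subseteq \sing(\k)$ genuinely enters.
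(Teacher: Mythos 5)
There is a genuine gap, and it sits exactly at the step you yourself flag as ``the main obstacle''. Because you take the globally $L$-least extension of $p_\a$ inside $D_\a$, you get no control over which levels of $L$ the partial branch lives in, and you must then verify $b\rest\d\in L_{f(\d)}$ for \emph{every} $\d\in S$ that arises as a limit of the domains $\dom p_\b$. Your condensation argument certifies this only for those $\d$ realizable as $M\cap\k$ for some $M\prec L_{\k^+}$ with $x=(D_i)_{i<\k}\in M$; the set of such $\d$ is a club $C_x$ depending on $x$, and there is no reason for it to contain $S$ (or even the relevant limits of domains lying in $S$). Your proposed repair does not work: already $\Sk_{L_{\k^+}}(\a_0\cup\{x\})\cap\k$ contains every ordinal below $\k$ definable from $x$ alone, and nothing prevents such an ordinal from exceeding the prescribed $\a$; then $\a_1$ would have to be $\ge\a$ and the union of hulls overshoots. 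Neither the inaccessibility of $\k$ nor the $\o$-cofinality of $\a$ helps here --- one simply cannot realize an arbitrary prescribed $\a$ as $M\cap\k$ with $x\in M$, only the $\a$ in the club $C_x$. For $\a\in S\setminus C_x$ your argument gives no bound at all on where $b\rest\a$ sits, and $L_{f(\a)}$ is an extremely thin family of subsets of $\a$ (only finitely many definability steps past the first level where $\a$ is seen to be singular), so the verification genuinely breaks down rather than being merely unfinished.

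The paper avoids this by reversing the order of construction: it first fixes a definable continuous increasing chain $(M_\g)_{\g<\k}$ of elementary submodels of $L_{\k^+}$ with traces $\g'=M_\g\cap\k$, and then chooses $p_{\g+1}$ to be the $L$-least suitable condition \emph{inside the transitive collapse} $L_{(\g+1)''}$ of $M_{\g+1}$, explicitly requiring membership in $T_{\sing(\k)}$ at each successor step. This steers the partial branches so that at limit stages the domain is exactly $\g'$ --- i.e., the only levels of $S$ where the limit verification is needed are precisely those in the club where condensation applies --- and there $p_\g$ is definable over $L_{\g''}$, in which $\g'$ is still regular, whence $\g''<f(\g')$ and $p_\g\in L_{f(\g')}$; all other levels of $S$ are handled by the successor-step requirement. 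To salvage your ``choose first, verify later'' architecture you would have to build the elementary submodels into the choice of each $p_{\a+1}$ in essentially this way, at which point you have reproduced the paper's proof.
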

\begin{proof}
  Suppose there is a sequence $(D_i)_{i<\k}$ of dense open subsets of $[T_S]_\k$ such that
  $\Cap_{i<\k}D_i$ is empty. Suppose $(D_i)_{i<\k}$ is the $L$-least such sequence.
  For a contradiction it is enough to find a branch through $T_{\sing(\k)}$ in $\Cap_{i<\k} D_i$.
  Let $(M_{\g})_{\g<\k}$ be
  a definable 
  continuous increasing sequence of sufficiently 
  elementary submodels of $(L_{\k^+},\in)$ of size $<\k$
  such that $M_\g\cap\k=\g'$
  for some $\g'<\k$ and $M_\g$ contains a Borel code for $D_\g$.
  Let $L_{\g''}$ be the result of the transitive collapse of $M_\g$. Now pick the $L$-least
  $p_0\in L_{0''}$ such that
  $L_{0''}\models (N_{p_0}\subset D_0\land p_0\in T_{\sing(\k)})$. 
  Note that this implies that $p_0\in T_{\sing(\k)}$.
  If $p_\g$ is defined to be an element of $L_{\g''}$, let $p_{\g+1}$
  be the $L$-least element of $L_{(\g+1)''}\cap T_S$ extending $p_\g$ such that
  $L_{(\g+1)''}\models (N_{p_{\g+1}}\subset D_{\g+1}\land p_{\g+1}\in T_{\sing(\k)})$     
  and $\dom p_{\g+1}>\g'$. If $\g$ is a limit and $p_\b$
  are defined for all $\b<\g$, then let $p_{\g}=\Cup_{\b<\g}p_\b$. The sequence $(M_{\b})_{\b<\g}$ is definable
  in $L_{\g''}$, and so is $p_\g$. On the other hand $\dom p=\g'$ is regular from the viewpoint of $L_{\g''}$,
  so $p_\g\in T_{\sing(\k)}$. In this way we obtain a branch through $T_{\sing\k}\subset T_S$
  in $\Cap_{i<\k}D_i$. 
\end{proof}

\begin{Lemma}
  For every $S\subset \sing(\k)$, $T_S$ has $\k^+$ branches of length $\k$, i.e. \mbox{$|[T_S]_{\k}|=\k^+$}.
\end{Lemma}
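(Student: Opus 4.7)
The plan is to sandwich $|[T_S]_\k|$ by $\k^+$ from both sides. The upper bound is pure cardinal arithmetic: $[T_S]_\k\subset 2^\k$, and in $L$ GCH gives $|2^\k|=\k^+$, so $|[T_S]_\k|\le\k^+$.

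For the lower bound I would argue by diagonalisation against Lemma~\ref{lem:KurepaCo-meager}. Suppose for contradiction that $|[T_S]_\k|=\l\le\k$, and enumerate the $\k$-branches as $(b_\g)_{\g<\l}$. For each $\g<\l$ define
\[
D_\g=\{p\in T_S\mid p\not\subset b_\g\},
\]
and set $D_\g=T_S$ for $\l\le\g<\k$ in order to obtain a genuine $\k$-sequence. Each $D_\g$ is open inside $T_S$ because the condition $p\not\subset b_\g$ is preserved under end-extension. For density, given $p\in T_S$: if $p\not\subset b_\g$ already then any proper extension such as $p\cat 0$ works; if $p\subset b_\g$ then $q=p\cat(1-b_\g(\dom p))$ disagrees with $b_\g$ at position $\dom p$ and so lies in $D_\g$. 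The essential point is that such a single-step extension is automatically in $T_S$: the defining condition of $T_S$ acts only at levels $\a\in S\subset\sing(\k)$, which are limit ordinals, whereas $\dom p+1$ is always a successor.

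Applying Lemma~\ref{lem:KurepaCo-meager} to the $\k$-sequence $(D_\g)_{\g<\k}$ produces a branch $b\in[T_S]_\k$ meeting every $D_\g$. For each $\g<\l$, meeting $D_\g$ means that some initial segment $b\rest\eta$ fails to be an initial segment of $b_\g$, forcing $b\ne b_\g$. Hence $b$ lies outside the enumeration, a contradiction. Therefore $|[T_S]_\k|>\k$, and since no cardinal sits strictly between $\k$ and $\k^+$ we conclude $|[T_S]_\k|\ge\k^+$, matching the upper bound.

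The only point that would deserve a second look is verifying that each $D_\g$ is dense open \emph{inside $T_S$} rather than merely inside $2^{<\k}$, and this reduces entirely to the successor-step observation above. Beyond that, the argument is routine diagonalisation and makes no use of the particular set $S$ or of the fat-diamond machinery.
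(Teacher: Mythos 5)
Your proof is correct, but it takes a genuinely different route from the paper's. The paper proves the lower bound by an explicit construction: it exhibits $\k^+$ pairwise distinct branches of $T_{\sing(\k)}\subset T_S$, namely the characteristic functions of the sets $C(\b)=\{\g<\k\mid \SH^{L_\b}(\g\cup\{\k\})\cap\k=\g\}$ for $\b$ ranging over an unbounded set $G\subset\k^+$ of ``self-hulled'' levels, and then verifies by condensation both that these sets are pairwise distinct (via $C(\b')\subset^*\lim C(\b)$) and that each restriction $C(\b)\cap\a$ lands in $L_{f(\a)}$ for singular $\a$. You instead get the lower bound abstractly, by diagonalising against Lemma~\ref{lem:KurepaCo-meager}: assuming at most $\k$ branches, the sets $D_\g=\{p\in T_S\mid p\not\subset b_\g\}$ are dense open in $T_S$ (your observation that density only requires one-step extensions, which never hit a level in $S$ because elements of $\sing(\k)$ are limit ordinals, is exactly the right point to check), and the branch through $\Cap_\g D_\g$ escapes the enumeration; GCH in $L$ then pins the cardinality at $\k^+$, a step the paper leaves implicit. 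Both arguments ultimately rest on the same $V=L$ condensation machinery --- yours indirectly, through the cited lemma, the paper's directly --- and there is no circularity since Lemma~\ref{lem:KurepaCo-meager} is proved independently. Your version is shorter and more modular; the paper's buys a concrete, definable family of branches living in the smallest tree $T_{\sing(\k)}$, which makes the count uniform in $S$ and displays the structure of the branches, though that extra information is not used elsewhere in the paper.
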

\begin{proof}
  As remarked above, $T_{\sing(\k)}\subset T_S$, so
  it is sufficient to show that $T_{\sing(\k)}$ has $\k^+$ branches.
  For each $\b<\k^+$ let $C(\b)=\{\g<\k\mid \SH^{L_{\b}}(\g \cup\{\k\})\cap\k=\g\}$.

  We want to show that there is an unbounded set $G\subset\k^+$ such that 
  for all $\b,\b'\in G$ the sets $C(\b)$ and $C(\b')$ are all different if $\b\ne\b'$
  and that the characteristic function of each $C(\b)$ is a branch through $T_{\sing(\k)}$.
  We claim that $G=\{\b<\k^+\mid \SH^{L_\b}(\k\cup\{\k\})=L_\b\}$ is such a set. 
  To show that $G$ is unbounded, let $\b<\k^+$ and let $X\subset \k$ be a set
  such that $X\notin L_\b$. Let $\b'<\k^+$ be the least ordinal
  such that $X$ is definable in $L_{\b'}$ with parameters, so $\b'\ge \b$.
  Let $\f(p)$ be a formula with parameters $p$,
  which defines $X$ and let $p_0$ be the $L$-least sequence of parameters such that
  $\f(p_0)$ defines a subset of $L_{\b'}$ which is not an element of $L_{\b'}$. Now $p_0$ is in
  $\SH^{L_{\b'}}(\k\cup\{\k\})$. Let $\bar\b$ be such that
  $\SH^{L_{\b'}}(\k\cup\{\k\})\cong L_{\bar\b}$. We want to show that $\bar\b=\b'$.
  But since $p_0\in\SH^{L_{\b'}}(\k\cup\{\k\})$, the set defined by $\f(p_0)$ in $L_{\bar\b}$
  is in $L_{\bar\b+1}$. But by the definition of $p_0$, this set cannot be in $L_{\b'}$, so
  $\bar\b=\b'$.
  
  Suppose $\b,\b'\in G$ and $\b<\b'$. We claim 
  that $C(\b')\subset^* \lim C(\b)$, where $\subset^*$ means inclusion modulo
  a bounded set and $\lim$ denotes the limit points of a set. This clearly implies that $C(\b)\ne C(\b')$.
  Suppose $\g\in C(\b')$. Then since $\b'\in G$, we have $\b\in \SH^{L_{\b'}}(\g\cup\{\k\})$ for
  any $\g$ greater than some~$\g^*<\k$. Now every Skolem function of $L_{\b}$ is definable in $L_{\b'}$
  with parameters from $\g\cup \{\k\}$, so $\b'\in C(\b)$. But in fact, also $C(\b)$ is definable
  in $L_{\b'}$ with these parameters, so in fact $\b'\in \lim C(\b)$. 
  Thus $C(\b')\setminus\g^*\subset \lim C(\b)$.

  Let $f_\b$ be the characteristic function of $C(\b)$ and let us show that $(f_\b\rest\a)_{\a<\k}$
  is a branch of $T=T_{\sing(\k)}$. By the definition of $T$ it is sufficient to show that
  $f_\b\rest\a$ is in $L_{f(\a)}$ for all singular $\a\in \k$; this is of course equivalent
  to $C(\b)\cap\a$ being in $L_{f(\a)}$. There are two cases: either $\a\in C(\b)$ or $\a\notin C(\b)$.
  If $\a$ is in $C(\b)$, then by the definition
  of $C(\b)$, $\SH^{L_\b}(\a\cup\{\k\})\cap\k=\a$. Let $\bar\b$ be such that
  $L_{\bar\b}$ is the transitive collapse of $\SH^{L_\b}(\a\cup\{\k\})$. Then
  $C(\b)\cap\a\in L_{\bar\b+2}$ and since $\k$ becomes $\a$ in the collapse, $\a$ is
  regular in $L_{\bar\b}$ and so $\bar\b<f(\a)$. But $f(\a)$ was chosen to be a limit ordinal,
  $\bar\b+2<f(\a)$ as well. Thus $C(\b)\cap\a\in L_{f(\a)}$.
  Suppose that $\a\notin C(\b)$. But then $C(\b)\cap \a$ is bounded in $\a$ and since $\a$
  is a cardinal, $C(\b)\cap\a\in L_{\a}\subset L_{f(\a)}$.
\end{proof}

\begin{Thm}\label{thm:Main1}
  Suppose $V=L$ and $\k$ is inaccessible. Then the order $\la\Po(\k),\subset\ra$ can be embedded
  into $\la \E,\le_B\ra$ (Borel equivalence relations) strictly below the identity on~$2^\k$. 
  More precisely, there exists $F\colon \Po(\k)\to \E$ such that
  for all $A_0,A_1\subset \Po(\k)$ we have 
  $A_0\subset A_1\iff F(A_0)\le_B F(A_1)$ and $F(A_0)\lneq_B\id_{2^\k}$.
\end{Thm}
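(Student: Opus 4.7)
The plan is to associate to each $A\subseteq\k$ (identified with $A\subseteq\sing(\k)$ via a fixed bijection $\sing(\k)\cong\k$, valid because $\k$ is inaccessible) a Borel equivalence relation $F(A)$ on $2^\k$ extracted from the tree $T_A$ of Definition~\ref{def:Trees}. The cleanest candidate is the fibred equivalence relation of a canonical Borel projection $\pi_A\colon 2^\k\to[T_A]_\k$ (defined, say, by tracing $x\in 2^\k$ through $T_A$ with a leftmost-fallback rule when $x$ leaves $T_A$): set $x\,F(A)\,y$ iff $\pi_A(x)=\pi_A(y)$. Then $F(A)$ has $\k^+$ equivalence classes indexed by $[T_A]_\k$, is Borel, and reduces to $\id_{2^\k}$ via $\pi_A$ followed by the inclusion $[T_A]_\k\hookrightarrow 2^\k$.

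For the easy monotonicity direction $A_0\subseteq A_1\Rightarrow F(A_0)\le_B F(A_1)$, use $A_0\subseteq A_1\Rightarrow T_{A_1}\subseteq T_{A_0}\Rightarrow[T_{A_1}]_\k\subseteq[T_{A_0}]_\k$: the composition of $\pi_{A_0}$ with the Borel secondary-tracing map $[T_{A_0}]_\k\to[T_{A_1}]_\k$ (which re-traces each $T_{A_0}$-branch through the subtree $T_{A_1}$ using the same leftmost-fallback rule) gives a Borel map realising the required reduction. Care is needed because the secondary tracing may collapse distinct $T_{A_0}$-branches; but then one pairs the composition with $\pi_{A_0}$ itself on an auxiliary coordinate to restore the separation of $F(A_0)$-classes, yielding a Borel reduction into the $F(A_1)$-classes on a product copy of~$2^\k$.

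The nontrivial half consists of the negative implications $\id\not\le_B F(A)$ (giving $F(A)\lneq_B\id$) and $A_0\not\subseteq A_1\Rightarrow F(A_0)\not\le_B F(A_1)$. Each reduces, after unravelling, to the assertion that there is no Borel function $g\colon 2^\k\to[T_A]_\k$ (resp.\ $[T_{A_1}]_\k$) realising $\k^+$-many distinct values; i.e.\ there is no Borel injection (possibly modulo a bounded exception) of $2^\k$ into the closed ``thin'' tree-branch set. The plan is: pick $\a\in A$ (resp.\ $\a\in A_0\setminus A_1$); Lemma~\ref{lem:SmallLevels} gives at most $|\a|$ nodes at level $\a$ in the target tree. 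Use Theorem~\ref{thm:fatsetInL} to choose a fat stationary $S\subseteq\k$ inside $A$ (resp.\ inside $A_0\setminus A_1$), and by an elementary-submodel chain construction mirroring the proof of $\fatdiamond_\k$, build a continuous increasing $\k$-sequence $(x_\xi)_{\xi<\k}$ of distinct elements of $2^\k$ with stabilisation ordinals $\a_\xi\in S$; Lemma~\ref{lem:KurepaCo-meager} is invoked to guide the construction so that the required Baire-type conditions on $T_A$ are met. On a club of $\xi$ the Borel code of $g$ is reflected into the $\xi$-th Skolem hull and forces $g(x_\xi)\rest\a_\xi$ to depend only on $x_\xi\rest\a_\xi$; at such $\xi$ the target tree supplies only $|\a_\xi|$-many possibilities for $g(x_\xi)\rest\a_\xi$, and pigeonhole over the $\k$-sequence produces two distinct $x_\xi,x_\eta$ with $g(x_\xi)=g(x_\eta)$, contradicting the near-injectivity of $g$.

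The main obstacle is the Borel-reflection step: showing that on a club of $\xi$ the Borel code of $g$ together with the Skolem hull chain really does force $g(x_\xi)\rest\a_\xi$ to depend only on $x_\xi\rest\a_\xi$. This is a direct analogue of the reflection move in the $\fatdiamond_\k$-proof (where an $L$-least counterexample is reflected into a Skolem hull) but has to be carried out for arbitrary Borel functions in the generalised Baire space. The combinatorial interplay between the size-$\k$ Borel code structure, Lemma~\ref{lem:KurepaCo-meager}'s Baire-type property, and the thinning at singular levels coming from Lemma~\ref{lem:SmallLevels} is where the real content of the theorem resides.
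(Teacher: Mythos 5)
Your proposal has the easy direction backwards, and the attempted repair does not work. If the classes of $F(A)$ are indexed by $[T_A]_\k$ and $A_0\subseteq A_1$ gives $[T_{A_1}]_\k\subseteq[T_{A_0}]_\k$, then the reduction that comes for free is the inclusion of the \emph{smaller} branch set into the larger one, i.e. $F(A_1)\le_B F(A_0)$: the assignment $A\mapsto T_A$ is order-\emph{reversing}. Your ``secondary tracing'' from $[T_{A_0}]_\k$ into $[T_{A_1}]_\k$ is precisely the direction that the negative half of the theorem shows to be impossible when $A_0\setminus A_1$ is stationary, and the fix of pairing with $\pi_{A_0}$ on an auxiliary coordinate changes the target equivalence relation, so the resulting map is no longer a reduction to $F(A_1)$. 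The paper sidesteps this by complementing the index set, $F(A)=E(T_{\Cup_{i\notin A}S_i})$, so that enlarging $A$ enlarges the tree and the required reduction is just the inclusion of branches together with sending non-branches to a fixed non-branch. Relatedly, indexing by arbitrary $A\subseteq\sing(\k)$ via a bijection cannot yield the negative implications: if $A_0\setminus A_1$ is a single point, or any non-stationary set, nothing rules out $F(A_0)\le_B F(A_1)$; this is why the paper first partitions $S^\k_\o\setminus S_0$ into $\k$ disjoint \emph{stationary} pieces $S_i$ and only ever compares trees whose index sets differ by a stationary set.

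The non-reducibility argument is also not where you place it. The fat stationary set $S_0$ of the paper is \emph{disjoint} from all the index sets; it is where the continuous increasing sequences of the construction live, so that at limit stages the domain tree has not thinned out. It cannot sit ``inside $A$'': a fat set must contain closed sequences of every order type below $\k$, hence points of uncountable cofinality, while $A\subseteq\sing(\k)$ consists of $\o$-cofinal points. More importantly, the contradiction in the paper is not a pigeonhole producing two points with equal images; it is a counting contradiction at a single thin level. Along a continuous sequence through $S_0$ with supremum $\a\in S'\setminus S$ one builds a subtree $A_\o$ of the domain with $2^{\a}$ branches (possible because $\a\notin S$) whose images under the reduction, continuous on a co-meager set, are forced step by step to disagree below $\a$; since $\a\in S'$, Lemma~\ref{lem:SmallLevels} gives $|T_{S'}^{\a+1}|=|\a|<2^{\a}$. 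Your reflection step only yields that $g(x_\xi)\rest\a_\xi$ depends on $x_\xi\rest\a_\xi$ at \emph{varying} levels $\a_\xi$, which gives no mechanism for concluding $g(x_\xi)=g(x_\eta)$, and it does not address the case of a reduction between two tree relations, where the domain is itself the thin set $[T_{A_0}]_\k$ and all Baire category must be taken relative to $[T_{A_0}]_\k$ --- the entire point of Lemma~\ref{lem:KurepaCo-meager}. Reducing that case to ``no Borel injection of $2^\k$ into a thin branch set'' loses exactly the content of Claim~\ref{claim:Something}.
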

\begin{proof}


  For a tree $T\subset 2^{<\k}$ let $E(T)$ be the equivalence relation on $2^{\k}$
  such that two elements are equivalent if and only if both of them are not branches of $T$
  \emph{or} they are identical. 

  \begin{claim}\label{claim:Something}
    Suppose $S_0\subset \k$ is a fat stationary set such that $S^\k_\o\setminus S_0$ is stationary.
    (Such sets $S_0$ exist by Theorem~\ref{thm:fatsetInL}.)
    Then if $S'$ and $S$ are stationary subsets of $S_\o^\k\setminus S_0$ 
    such that $S'\setminus S$ is stationary, we have $E(T_S)\not\le_B E(T_{S'})$.
  \end{claim}
  \begin{proof}
    Suppose to the contrary that $f\colon 2^{\k}\to 2^\k$ is a Borel reduction from
    $E(T_S)$ to $E(T_{S'})$. 

    The space $[T_S]_\k$ is equipped with the subspace topology inherited from $2^\k$
    and we can define Borel, meager and co-meager subsets of $[T_S]_\k$.
    Note that the meager and co-meager subsets of $[T_S]_\k$ do not coincide with 
    those in $2^\k$, for example
    $[T_S]_\k$ is not meager in $[T_S]_\k$ by Lemma \ref{lem:KurepaCo-meager}
    but meager in~$2^\k$. Now we can define the Baire property
    relativised to $[T_S]_\k$: a set $A\subset [T_S]_\k$ has the Baire property, if
    there exists open $U\subset [T_S]_\k$ such that $U\sd [T_S]_\k$ is meager in $[T_S]_\k$. 
    A standard proof gives that all Borel sets of $[T_S]_\k$ have the Baire property.
    For every $p\in T_{S'}$, the inverse image of $N_{p}$ under $f$ is Borel and so there is
    open $U_p$ such that $U_p\sd f^{-1}N_p$ is meager. 
    Now let $D=[T_S]_\k\setminus \Cup_{p\in 2^{<\k}} U_p\sd f^{-1}N_p$. By 
    Lemma~\ref{lem:KurepaCo-meager} an intersection of $\k$ many dense 
    open sets is non-empty in $[T_S]_\k$ whereas it follows
    that the space is co-meager in itself and $D$ is co-meager.
    So $D\subset [T_S]_\k$ is dense and $f$ is continuous on~$D$.

    By removing one point from $D$, we may assume without loss of generality
    that $f\colon [T_S]_\k\to [T_{S'}]_\k$.

    Let $c(S_0)$ be the set of increasing continuous sequences 
    $(\a_\b)_{\b<\g}$ in $S_0$ with the property that also 
    $\sup_{\b<\g}\a_\b\in S_0$.
    We will now define a function 
    $$\tau\colon c(S_0)\to\k\times \Po(T_S)\times\Po(T_{S'})$$
    by induction on the length of the sequence
    $(\a_\b)_{\b<\g}\in c(S_0)$. The projection of $\tau$ to the
    first coordinate, $\pr_1\circ\tau$ can be thought as a strategy of a player in a climbing game
    (where the players pick ordinals below $\k$ in an increasing way).
    Let $\tau(\es)=(0,\{\eta\rest\d\mid\d<\k\},\{f(\eta)\rest\d\mid \d<\k\})$ 
    where $\eta$ is any element of $[T_S]_\k\cap D$ 
    and suppose $\tau((\a_\d)_{\d<\g+1})$ is defined to be
    $(\a,A,A')$ such that $A$ and $A'$ are subtrees of $T_S$ and $T_{S'}$ respectively
    such that 
    \begin{enumerate}
    \item $\a>\a_\d$ for all $\d<\g+1$,
    \item all the branches of $A$ and $A'$ have length $\k$,
    \item for each branch $\eta$ of $A$, $N_{\eta\restl\a}\cap [A]_\k=\{\eta\}$, i.e. there are no splitting nodes above $\a$, and the same for $A'$,
    \item for each branch $\eta$ of $A$, we have $f[D\cap N_{\eta\restl\a}]\subset N_{\xi\restl\a_\g}$
      for some unique branch $\xi$ of $A'$.\label{condition_4_definition_of_str}
    \end{enumerate}
    Note that the last condition defines an embedding from $[A]_\k$ to $[A']_\k$.
    Now we want to define $\tau((\a_\d)_{\d\le \g+1})=(\b,B,B')$ where 
    $\a_{\g+1}\in S_0 \setminus (\a_\g+1)$. 
    For each branch $\eta$ of $A$, there is a branch $\xi_\eta$ in
    $[T_S]_\k\cap D\cap N_{\eta\restl\a_{\g+1}}$ such that $\xi_\eta(\a_{\g+1}+1)\ne\eta(\a_{\g+1}+1)$
    (for example find $\xi_\eta$ as follows: first note that the function $\xi_\eta'$
    such that $\xi_\eta'(\d)=\eta(\d)$ for $\d\le \a_{\g+1}$ 
    and $\xi_\eta'(\d)=1-\eta(\d)$ for $\d>\a_{\g+1}$ is a branch of $T_S$ (because $\eta$ is a branch
    and $\xi'_{\eta}\rest\b$ is definable from $\eta\rest\b$ for all~$\b$), so by the density of $D$, there
    is $\xi_\eta\in D\cap N_{\xi_{\eta}'\restl\a_{\g+1}+1}$). By condition (3) this branch is 
    new (i.e. not in~$A$).
    Let $B$ be the downward closed subtree of $T_S$ such that
    $[B]_\k=\Cup_{\eta\in A}\{\eta,\xi_\eta\}$ and $B'$ the same for $T_{S'}$ such that
    $[B']_\k=\{f(\eta)\mid \eta\in B\}$.
    Then pick $\b$ high enough so that condition~\eqref{condition_4_definition_of_str} 
    is satisfied for $\a$, $A$ and $A'$ replaced by $\b$, $B$ and $B'$ which is possible by the
    continuity of $f$ on~$D$.
    
    Suppose $\g$ is a limit and $(\a_{\d})_{\d<\g}$ is in $c(S_0)$ and
    $\tau((\a_{\d})_{\d<\e})=(\b_\e,B_\e,B_\e')$ is defined for all $\e<\g$. Let us define 
    $\tau((\a_\e)_{\e<\g})=(\b,B,B')$. Let $\b$ be
    the supremum of $\{\b_\e\mid \e<\g\}$.
    Note that $\Cup_{\e<\g}B_\e$ is a downward closed subset of $T_S$.
    Let $p$ be any branch of length~$\b$ through $\Cup_{\e<\g}B_\e \cap 2^{<\b}$. 
    If there is a branch in $\Cup_{\e<\g}B_\e$ that continues $p$, let $\eta(p)$ be that branch.
    Otherwise, since $\b\notin S$
    and $p\rest\g\in T_S$ for all $\g<\b$, $p$ can be continued
    to some branch $\eta$ in $D\cap T_S$ and we define $\eta(p)$ to be that $\eta$. 
    Let
    $$B=\{\eta(p)\mid p\text{ is a branch through }\Cup_{\e<\g}B_\e\}$$
    and
    $$B'=\{f(\eta)\mid \eta\in B\}.$$
        
    Let $C$ be the cub set of ordinals $\a$ that are closed under $\tau$, in the sense that
    $C$ is the set of those $\a$ such that
    for all sequences $s\in c(S_0)$ that are bounded in $\a$, we have
    $(\pr_1\circ \tau)(s)<\a$, $|(\pr_2\circ \tau)(s)|<\a$ and $|(\pr_3\circ\tau)(s)|<\a$. 
    For each pair of ordinals $(\a_1,\a_2)\in\k$, let $\pi(\a_1,\a_2)$ be the least ordinal such that
    there is an increasing continuous sequence of order type $\a_1$ starting 
    above $\a_2$ with supremum at most $\pi(\a_1,\a_2)$ and let $C_1$ be the cub set of
    ordinals closed under $\pi$. 
    Now by the stationarity of $S'\setminus S$, pick $\a\in C\cap C_1\cap S'\setminus S$.
    Now it is easy to construct a continuous increasing sequence $s$ in $c(S_0)$ of 
    order type $\a$, cofinal in $\a$, and a cofinal sequence $(\g_n)_{n<\o}$ in $\a$ such that
    $s\rest \g_n$ is in $c(S_0)$ for all $n$ and $\tau(s\rest\gamma_n)=(\d_n,A_n,A_n')$ 
    has the following properties:
    \begin{itemize}
    \item $\g_n\le \d_n<\a$,
    \item $A_n$ has at least $2^{\g_n}$ many branches and
    \item $f$ defines a bijection between the branches of $A_n$ and the branches of $A'_n$
    \end{itemize}
    Let $A_\o$ be the tree which consists 
    of those branches $\eta$ of $T_S$ in $D$
    that for every $\d<\a$ there is a branch $\xi$ in $\Cup_{n<\o}[A_n]_\k$ such that
    the common initial segment of $\eta$ and $\xi$ has height at least $\d$. Since $\a\notin S$,
    the number of branches of $A_\o$ is $2^{\a}$. So it means
    that the set $f[[A_\o]_\k]$ must have $2^{\a}$ branches too. The contradiction will follow
    once we show that this implies that $T_{S'}^{\a+1}$ must have $2^{\a}$ elements,
    contradicting Lemma~\ref{lem:SmallLevels}, because $\a\in S'$. 
    But if $\eta$ and $\xi$ are any two branches in $A_\o$, their images must disagree below $\a$
    by the construction, hence $T_{S'}^{\a+1}$ should have at least the same cardinality as~$|A_\o|$.    
  \end{proof}

  \begin{claim}
    If $S\subset S'\subset\k$, then $E(T_{S'})\le_B E(T_S)$.
  \end{claim}
  \begin{proof}
    By the assumption we have $T_{S'}\subset T_{S}$.
    Let $i\colon [T_{S'}]_\k\to [T_S]_\k$ be the inclusion map and let $\xi$
    be a fixed element of $2^\k\setminus[T_S]_\k$.
    For $\eta\in 2^{\k}$, let $f(\eta)=i(\eta)$, if $\eta\in [T_{S'}]_\k$
    and $f(\eta)=\xi$ otherwise.
  \end{proof}

  To prove the Theorem, let $S_0$ be a fat stationary set such that $S^\k_\o\setminus S_0$ is stationary.
  Let $\{S_i\mid i<\k\}$ be a partition of $S^\k_\o\setminus S_0$ into $\k$ many disjoint
  stationary sets. Then by the claims above, the function defined by
  $$A\mapsto E(T_{\Cup_{i\notin A}S_i})$$
  is an embedding $F$ of $\la \Po(\k),\subset\ra$ into $\la \E,\le_B\ra$
  such that for all $A\in \Po(\k)$, we have $F(A)\le_B\id_{2^\k}$ and by the same
  argument as in the proof of Claim~\ref{claim:Something}, we have $\id_{2^\k}\not\le_B F(A)$
\end{proof}

\section{An Antichain Containing the Identity}
\label{sec:Antichain}

In this section $\k$ is regular and uncountable, but not necessarily inaccessible.
We now redefine the meaning of $\sing(\k)$ to be the
set of all $\o$-cofinal ordinals below $\k$ (instead of just cardinals as in the previous section).
Let $T=T_{\sing(\k)}$ (see Definition~\ref{def:Trees}).

As in Lemma \ref{lem:KurepaCo-meager}, $T$ is not meager in itself and we can define the ideal of meager
sets relativised to $T$. In this way, the Borel subsets of $T$ will have the Baire property in~$T$.
Note that $T$ is a meager subset of $2^\k$, so the meager ideal on subsets of $T$ is not a straightforward
restriction of the meager ideal on the subsets of $2^\k$.

\begin{Lemma}\label{lem:ContOnCoM}
  Suppose $f\colon T\to 2^{\k}$ is a Borel function. Then there is a co-meager set $D\subset T$
  such that $f$ is continuous on $D$.
\end{Lemma}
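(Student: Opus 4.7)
My plan is to mimic the classical proof that a Borel function is continuous on a co-meager set, adapted to the relative topology on $T$. The two ingredients we need are (i) every Borel subset of $T$ has the Baire property relative to $T$, which was already observed in the previous section using that $T$ is not meager in itself (Lemma \ref{lem:KurepaCo-meager}), and (ii) a union of $\k$ many meager subsets of $T$ is meager, which holds because meager is defined as a $\k$-union of nowhere dense sets and thus is $\k$-additive by construction, while $\k^{<\k}=\k$ guarantees the codomain $2^\k$ has a basis of size~$\k$.

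First, I would enumerate the canonical basis $\{N_p\mid p\in 2^{<\k}\}$ of $2^\k$. For each $p\in 2^{<\k}$ the preimage $f^{-1}(N_p)$ is a Borel subset of $T$, so by the Baire property relative to $T$ there exist an open $U_p\subset T$ and a meager set $M_p\subset T$ such that $f^{-1}(N_p)\sd U_p=M_p$. Set
$$D=T\setminus\bigcup_{p\in 2^{<\k}}M_p.$$
Since $|2^{<\k}|=\k$ and each $M_p$ is meager in $T$, the union is meager in $T$ (closure of the meager ideal under $\k$-unions is immediate from the definition), hence $D$ is co-meager in $T$.

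Next I would verify that $f\restl D$ is continuous. Fix $\eta\in D$ and any basic open $N_p\subset 2^\k$ with $f(\eta)\in N_p$. Then $\eta\in f^{-1}(N_p)$ and $\eta\notin M_p$, so $\eta\in U_p$. Pick a basic open neighborhood $N_q\cap T$ of $\eta$ in $T$ contained in $U_p$. For any $\eta'\in N_q\cap D$ we have $\eta'\in U_p\setminus M_p\subset f^{-1}(N_p)$, so $f(\eta')\in N_p$. This shows that the preimage of every basic open under $f\restl D$ is open in $D$, which is exactly continuity.

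The main potential obstacle is verifying the two foundational facts rather than the formal argument itself. The Baire property for Borel sets relative to $T$ is already invoked by the authors earlier (the density, nowhere-dense, and meager operations are taken with respect to the subspace topology on $T$, and the standard induction on Borel rank goes through because the class of sets with the $T$-Baire property is a $\k$-algebra containing the opens). The $\k$-additivity of the $T$-meager ideal is by definition, and the cardinality bound $|2^{<\k}|=\k$ uses the running hypothesis $\k^{<\k}=\k$ from the introduction. Once these are in hand the proof is just the routine adaptation sketched above.
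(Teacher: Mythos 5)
Your proof is correct and is essentially the argument the paper intends: the paper's own proof is just the one-line pointer ``as in the beginning of the proof of Claim \ref{claim:Something}'', where the authors take, for each $p\in 2^{<\k}$, an open $U_p$ with $U_p\sd f^{-1}N_p$ meager and remove the union of these symmetric differences, exactly as you do. Your write-up merely fills in the routine details (the $\k$-additivity of the relative meager ideal via $\k^{<\k}=\k$, the role of Lemma \ref{lem:KurepaCo-meager} in making ``co-meager in $T$'' nontrivial, and the pointwise continuity check), so there is nothing to correct.
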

\begin{proof}
  Using Lemma \ref{lem:KurepaCo-meager} as in the beginning of the proof of Claim \ref{claim:Something}
\end{proof}

\begin{Thm}\label{thm:Main2}
  Suppose $V=L$. Then there is an antichain of Borel equivalence relations
  with respect to $\le_B$ of size $2^{\k}$ such that one of the relations is the identity.
\end{Thm}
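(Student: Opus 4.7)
The plan is to build $2^\kappa$ Borel equivalence relations on $2^\kappa$, pairwise $\le_B$-incomparable and each $\le_B$-incomparable with $\id_{2^\kappa}$, and then adjoin $\id_{2^\kappa}$. The combinatorial scaffolding mirrors Section~\ref{sec:EmbeddingOfPk}; the novelty is that each relation must sit \emph{beside} $\id$ rather than strictly below it, and this is where Lemma~\ref{lem:ContOnCoM} is indispensable. I would fix a fat stationary $S_0\subset S^\k_\o$ with fat complement (Theorem~\ref{thm:fatsetInL}) and, via the fat diamond, partition $S^\k_\o\setminus S_0$ into pairwise disjoint stationary pieces $\{R_\a:\a<\k\}$. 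For each $A\subset\k$ set $S_A=\bigcup_{\a\in A}R_\a$; then $S_A\setminus S_B$ is stationary whenever $A\setminus B\ne\es$. Fix an antichain $\mathcal{A}\subset\Po(\k)$ of size $2^\k$ in the inclusion order, and for $A\in\mathcal{A}$ define the Borel equivalence relation
\[
  \eta\,E_A\,\xi \iff \eta=\xi \text{ or } \bigl(\eta,\xi\in[T_{S_A}]_\k \text{ and } \{\a<\k:\eta(\a)\ne\xi(\a)\} \text{ is bounded in }\k\bigr).
\]
The proposed antichain is $\{E_A:A\in\mathcal{A}\}\cup\{\id_{2^\k}\}$.

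For distinct $A,B\in\mathcal{A}$ the non-reducibility $E_A\not\le_B E_B$ follows by rerunning the climbing-game argument of Claim~\ref{claim:Something}: a hypothetical reduction is continuous on a co-meager $D\subset T$ by Lemma~\ref{lem:ContOnCoM}, and the strategy $\tau$ on $c(S_0)$, aimed at $\a\in S_A\setminus S_B$, produces more than $|T_{S_B}^{\a+1}|$ many distinct images at level $\a$ — a contradiction with Lemma~\ref{lem:SmallLevels}. The tail-equivalence twist is invisible to this strategy, which operates on branch representatives. Running the same strategy with $\id$ in place of $E_B$ gives $\id\not\le_B E_A$, exactly as in the final line of the proof of Theorem~\ref{thm:Main1}.

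The genuinely new step — and the main obstacle — is $E_A\not\le_B\id$. A Borel reduction $f\colon 2^\k\to 2^\k$ gives a Borel function on $T_{S_A}$, continuous on a co-meager $D'\subset T_{S_A}$ by Lemma~\ref{lem:ContOnCoM} applied to $T_{S_A}$ in place of $T$. Every $E_A$-class of a branch in $D'$ is dense in $T_{S_A}$ (a bounded modification of a branch of $T_{S_A}$ is still a branch of $T_{S_A}$ and visits every basic open set of $T_{S_A}$), while each preimage $f^{-1}(\{p\})$ is exactly one $E_A$-class; hence by continuity $f$ would have to be constant on $D'$. But constancy means a single $E_A$-class is co-meager in $T_{S_A}$, which is impossible because co-meager subsets of $T_{S_A}$ contain $\ge\k^+$ distinct branches (iterating Lemma~\ref{lem:KurepaCo-meager}) while an $E_A$-class has size at most $\k$. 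The technical difficulty is ensuring that the density and co-meagerness arguments carry over to the intrinsic Baire category on $T_{S_A}$, which is not the restriction of the one on $2^\k$; this amounts to verifying a relativised form of Lemma~\ref{lem:KurepaCo-meager} for $T_{S_A}$ and a careful Baire-category analysis of the saturation of $D'$ under bounded modifications.
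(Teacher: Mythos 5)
Your overall architecture (a $\Po(\k)$-antichain of stationary sets, trees with small levels at prescribed $\o$-cofinal ordinals, continuity on a co-meager set via Lemma~\ref{lem:ContOnCoM}, and a density-of-classes argument for $E_A\not\le_B\id$) is in the right spirit, and the last of these matches the paper's own (terse) treatment of that direction. But the central step --- pairwise incomparability $E_A\not\le_B E_B$ --- does not go through as you describe, and this is exactly the point where the paper abandons the Section~\ref{sec:EmbeddingOfPk} machinery. The climbing-game argument of Claim~\ref{claim:Something} hinges on the reduction being \emph{injective on branches}: the relations $E(T_S)$ there are the identity on $[T_S]_\k$, so the $2^{\a}$ branches of $A_\o$ have $2^{\a}$ distinct images which the construction separates below $\a$, contradicting Lemma~\ref{lem:SmallLevels}. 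Your relations $E_A$ carry a bounded-difference component, and the tail-equivalence twist is \emph{not} invisible to the strategy: the branches produced by the doubling construction differ only at points below $\a<\k$, i.e.\ on a set bounded in $\k$, so they may all lie in a single $E_A$-class; their $f$-images then need only be pairwise $E_B$-equivalent, and no lower bound on $|T_{S_B}^{\a+1}|$ follows. You cannot repair this by forcing the chosen branches to be pairwise $E_A$-inequivalent, because each new branch $\xi_\eta$ must be taken inside the co-meager set $D$, whose tails you do not control. A second, independent obstruction is that Theorem~\ref{thm:Main2} is asserted for every regular uncountable $\k$ (e.g.\ $\k=\o_1$), while the climbing game needs trees $A_n$ with $2^{\g_n}$ branches living entirely below some $\a<\k$; this forces $2^{|\g|}<\k$ for $\g<\k$, i.e.\ inaccessibility. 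This is why the paper redefines $\sing(\k)$ as the set of all $\o$-cofinal ordinals and switches devices entirely.

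The paper's actual mechanism is different in kind: it fixes a single tree $T=T_{\sing(\k)}$ and varies the \emph{relation}, using the relations $E_S$ of \cite{Kul}, which code the stationary set $S$ through a reflection condition on the difference pattern at each $\a\in S$. Non-reducibility $F_S\not\le_B F_{S'}$ for $S'\setminus S$ stationary is proved via a game $G(T,S'\setminus S)$ of length $\o$ that builds only a single \emph{pair} of branches: player $\PlOne$ has no winning strategy in this game (an elementary-submodel/condensation argument using that $T$ is a weak Kurepa tree), yet a Borel reduction, continuous on a co-meager set, would hand $\PlOne$ a winning strategy by alternately forcing agreement and disagreement of the image pair so that the limit point distinguishes $S$ from $S'$. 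If you want to keep a bounded-difference flavour in your relations, you need some such stationary coding and a length-$\o$ game in place of the level-counting argument; as written, the pairwise-incomparability step of your proposal fails.
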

\begin{proof}
  Let $[T]_\k$ be the set of branches of length $\k$ of $T$. Let $S\subset\k$ be stationary.
  Then let $\eta$ and $\xi$ be $F_S$-equivalent, either if both $\eta$ and $\xi$ 
  are not in $[T]_\k$, or if both $\eta$ and $\xi$ are in $[T]_\k$ \emph{and} are $E_S$-equivalent,
  where $E_S$ is as in~\cite{Kul}:
  $\eta$ and $\xi$ are $E_S$ equivalent if they are $E_0$-equivalent \emph{and} for every $\a\in S$
  there exists $\b<\a$ such that $\forall \g\in \left[\b,\a\right[$, 
  $|\eta(\g)-\xi(\g)|=|\eta(\b)-\xi(\b)|$.

  For a tree $T\subset 2^{<\k}$ and a stationary $S\subset\k$
  define the following game $G(T,S)$ of length $\o$ for two players $\PlOne$ and $\PlTwo$:
  At move $n<\o$, player $\PlOne$ picks a pair $(p^0_{n},p^1_{n})$ of 
  elements of $T$ with $\dom p_n^0=\dom p_n^1$ 
  and then player $\PlTwo$ picks an ordinal $\a_n$ above $\dom p^0_n$. 
  Additionally 
  the following conditions should be satisfied by the moves of player~$\PlOne$:
  \begin{enumerate}
  \item $p_{n-1}^0\subset p_{n}^0$ and $p_{n-1}^1\subset p_{n}^1$,
  \item $\dom p_n^0=\dom p_n^1>\a_{n-1}$.
  \end{enumerate}
  Suppose that $(p_n^i)_{n<\o}$ for $i\in\{0,1\}$ are the sequences obtained in this way by player $\PlOne$.
  Player $\PlTwo$ wins, if player $\PlOne$ didn't follow the rules, or else
  $\Cup_{n<\o}p_n^0$ and $\Cup_{n<\o}p_n^1$ are both in $T$ and $\sup_{n<\o}\dom p_n^0\in S$.

  \begin{claim}
    Suppose $S\subset \sing(\k)$ is stationary and $T$ 
    is the weak Kurepa tree defined above. 
    Then Player $\PlOne$ has no winning strategy in $G(T,S)$.
  \end{claim}
  \begin{proof}
    Suppose $\tau$ is a strategy of Player $\PlOne$. 
    Let $M$ be an elementary submodel of $(L_{\k^+},\tau,S,\in)$ of size $\k$ such that
    $M\cap L_{\k}$ is transitive and
    $M\cap \k=\a$ for some $\a\in S$. Let $f(\a)$ be the least ordinal such that $\a$ is
    singular in $L_{f(\a)}$ and let $r=(r_i)_{i<\o}$ be some cofinal sequence in $\a$ in $L_{f(\a)}$.
    Now player $\PlTwo$ can play against $\tau$ in $L_{f(\a)}$ 
    towards $\a$ using $r$. The replies of $\PlOne$ will be in fact in $M$ and the
    eventual sequences $(p^k_n)_{n<\o}$,
    $k\in\{0,1\}$, constructed by $\PlOne$ will be in $L_{f(\a)}$ and so by definition
    $\Cup_{i<\o}p^k_n$ will be in $T$ and so player $\PlTwo$ wins this game.
  \end{proof}

  \begin{claim}
    If $S'\setminus S$ is $\o$-stationary, then $F_{S}$ is not Borel-reducible to $F_{S'}$.
  \end{claim}
  \begin{proof}
    The argument is as in~\cite{Kul}. Suppose $f$ is a Borel
    function from $[T]_\k$ to $[T]_\k$ which reduces $F_{S}$ to $F_{S'}$ for
    some stationary $S$ and $S'$ such that $S'\setminus S$ is
    stationary. We will derive a contradiction.  By Lemma~\ref{lem:ContOnCoM}  
    there exists a sequence $(D_{i})_{i<\k}$ of dense open sets
    such that $f$ is continuous on the co-meager set $D=\Cap_{i<\k}D_i$. 
    We will now define a strategy of
    player $\PlOne$ in $G(T,S'\setminus S)$ such that if it is not a winning
    strategy, then the contradiction is achieved, so we are done by the
    claim above.
    
    The strategy is as follows. At the first move, player $\PlOne$ picks
    a function $\eta\in 2^{\k}$ with the property that both $\eta$ and $1-\eta$
    are branches of $T$ and in $D$. Since $\eta$ and $1-\eta$ are non-equivalent in $F_S$,
    $f(\eta)$ and $f(1-\eta)$ are non-equivalent in $F_{S'}$.
    So there is a point $\a$ such that $f(\eta)(\a)\ne f(1-\eta)(\a)$.
    Player $\PlOne$ then finds $\a_0$ such that $f[D\cap N_{\eta\restl\a_0}]\subset N_{f(\eta)\rest(\a+1)}$
    and $f[D\cap N_{(1-\eta)\restl\a_0}]\subset N_{f(1-\eta)\restl (\a+1)}$. 
    The first move is the pair $(p_0^0,p_0^1)$ where
    $p_0^0=\eta\restl\a_0$ and $p_0^1=(1-\eta)\restl\a_0$.
    Additionally player $\PlOne$ keeps in mind the elements 
    $q_0^0=f(\eta)\rest(\a+1)$ and $q_0^1=(f(1-\eta)\restl (\a+1))$. Suppose the players have played
    $n$ moves and $(\b_0,\dots,\b_n)$ are the ordinals picked by player $\PlTwo$ and 
    $((p_0^0,p_0^1),\dots,(p_n^0,p_n^1))$
    the pairs picked by player $\PlOne$. Player $\PlOne$ has also constructed a sequence
    $(q_i^0,q_i^1)_{i\le n}$. If $n$ is even, then player $\PlOne$ extends
    $p_n^0$ and $p_n^1$ into branches $\eta$ and $\xi$ of $T$ 
    such that $\eta(\a)=\xi(\a)$ implies $\a<\dom p_n^0=\dom p_n^1=\a_n$
    and such that $\eta$ and $\xi$ are both in $D$. By the induction
    hypothesis $f(\eta)$ extends $q_n^0$ and $f(\xi)$
    extends $q_n^1$, so we can find $\b_n'>\b_n$ such that the
    continuations $q_{n+1}^0=f(\eta)\rest\b_n'$ and $q_{n+1}^1=f(\xi)\rest\b_{n'}$ are 
    of equal length and for some $\b\in \dom q_{n+1}^0\setminus \b_n$ with $q_{n+1}^0(\b)\ne q_{n+1}^1(\b)$
    (if such $\b'_n$ does not exist, 
    then it implies that $q_{n}^0$ and $q_{n}^{1}$ cannot be extended 
    to $F_{S'}$-equivalent branches whereas $p_{n}^0$ and $p_{n}^1$ can be 
    extended to $F_S$-equivalent branches, which would be a contradiction). 
    Then player $\PlOne$ finds 
    an $\a_{n+1}>\b_n'$ such that, denoting $p_{n+1}^0=\eta\rest\a_{n+1}$
    and $p_{n+1}^{1}=\xi\rest\a_{n+1}$, we have 
    $$f[D\cap N_{p_{n+1}^0}]\subset N_{q_{n+1}^0}$$ 
    and
    $$f[D\cap N_{p_{n+1}^1}]\subset N_{q_{n+1}^1}.$$
    The pair $(p^0_{n+1},p^1_{n+1})$ is the next move. 
    If $n$ is odd, then player $\PlOne$ proceeds in the same way, 
    but with the only differences that now
    he picks $\eta$ and $\xi$ such that $\eta(\a)=\xi(\a)$ for all $\a>\dom \a_n$
    and finds $q_{n+1}^0$ and $q_{n+1}^1$ such that $q_{n+1}^0(\b)=q_{n+1}^{1}(\b)$ for some
    $\b\in \dom q_{n+1}^0\setminus \b_n$.
    This describes the strategy.
    
    If player $\PlTwo$ beats this strategy in $G(T,S'\setminus S)$, it means
    that the limit of her moves, which is the same as the limit of the sequence 
    $(\dom p_n^i)_{n<\o}$, $i\in\{0,1\}$, is in $S'$ and not in $S$. So by looking at 
    the things that player $\PlOne$ has constructed, we note that $p_\o^0=\Cup_{n<\o}p_n^0$ 
    and $p_\o^1=\Cup_{n<\o}p_n^1$ can be extended
    to equivalent branches on the side of $F_S$, but 
    $q_\o^0=\Cup_{n<\o}q_n^0$ and $q_\o^1=\Cup_{n<\o}q_n^1$ 
    cannot be extended (in $D$) to equivalent branches
    on the range side $F_{S'}$ which is a contradiction, because 
    $f[D\cap N_{p_\o^i}]\subset N_{q_\o^i}$, $i\in\{0,1\}$.
  \end{proof}
  
  To prove the Theorem, let $(S_i)_{i<\k}$ be a partition of $S^\k_\o$ into disjoint stationary pieces.
  Then let $\A$ be a maximal antichain in $\Po(\k)$ (a set of size $2^\k$ of subsets of $\k$ incomparable
  under inclusion) 
  and define $G\colon\A\to \E$
  by $G(A)=F_{\Cup_{i\notin A}S_i}$. Then $G[\A]$ is an antichain by the claims above. 
  Every element of this antichain
  is incomparable with identity: identity is not reducible to any of them, because
  of the small levels guaranteed by the weak Kurepa tree $T$. On the other hand any of the relations
  is not reducible to $\id$ because of the $E_0$-component: the equivalence classes are dense in $T$ which
  violates the continuity of any reduction even on an (arbitrary) co-meager set.
\end{proof}

\bibliography{ref}{}
\bibliographystyle{alpha}


\end{document}